\newtheorem{thm}{Theorem}
\newtheorem{lem}[thm]{Lemma}
\begin{document}
\author{Ajai Choudhry}
\title{An improvement of Prouhet's 1851 result\\
on multigrade chains}
\date{}
\maketitle

\begin{abstract}
In 1851 Prouhet showed that when $N=j^{k+1}$ where $j$ and $k$ are positive integers, $j \geq 2$,  the first $N$ consecutive positive integers can be separated into $j$ sets, each set containing $j^k$ integers, such that the sum of the $r$-th powers of the members of each  set is the same for $r=1,\,2,\,\ldots,\,k$. In this paper we show that even when $N$ has the much smaller value $2j^k$, the first $N$ consecutive positive integers can be separated into $j$ sets, each set containing $2j^{k-1}$ integers, such that the integers of each set  have equal sums of $r$-th powers  for $r=1,\,2,\,\ldots,\,k$. Moreover, we show that this can be done in at least $\{(j-1)!\}^{k-1}$ ways. We also show that there are infinitely many other positive integers $N=js$ such that the first $N$ consecutive positive integers can   similarly be separated into $j$ sets of integers, each set containing $s$ integers,  with equal sums of  $r$-th powers  for $r=1,\,2,\,\ldots,\,k$, with the value of $k$ depending on the integer $N$.
\end{abstract}

Keywords:  multigrade chains; Prouhet-Tarry-Escott problem; equal sums of like powers; multigrade equations.

Mathematics Subject Classification: 11D41

\section{Introduction}
If there exist integers $a_{uv},\;u=1,\,2,\,\ldots,\,s,\;v=1,\,2,\,\ldots,\,j$ ($j$ and $s$ being positive integers $ \geq 2$), such that the relations
\begin{equation}
\sum_{u=1}^sa_{u1}^r=\sum_{u=1}^sa_{u2}^r=\cdots =\sum_{u=1}^sa_{uj}^r, \label{basicchn}
\end{equation}
are satisfied when $r=1,\,2,\,\dots,\,k$, we write,
\begin{equation}
a_{11},\,a_{21},\ldots,\,a_{s1} \stackrel{k}{=} a_{12},\,a_{22},\ldots,\,a_{s2} \\
\stackrel{k}{=}  \ldots \stackrel{k}{=} a_{1j},\,a_{2j},\ldots,\,a_{sj}. \label{basicchnnot1}
\end{equation}
 A solution of \eqref{basicchn} is said to be nontrivial if the $j$  sets $\{a_{uv},\,u=1,\,2,\,\ldots,\,s\}$, $v=1,\,2,\,\ldots,\,j$, are distinct.  The least value of $s$ for which there exists a nontrivial solution of \eqref{basicchn} is denoted by $P(k,\,j)$. Relations of type \eqref{basicchn} are known as multigrade chains.

The first example of multigrade chains  was obtained  in 1851 by  Prouhet \cite[p. 449]{HW} who gave a rule to  separate the first $j^{k+1}$ positive integers into $j$ sets  that provide a multigrade chain \eqref{basicchnnot1} with $s=j^k$. Relevant excerpts from Prouhet's original note are  given in \cite[pp. 999-1000]{BP}. As a numerical example, Prouhet noted that the integers $1,\,2,\,\ldots,\,27$ can be separated into three sets  satisfying the relations, 
\begin{equation}
\begin{aligned}
1,\,6,\,8,\,12,\,14,\,16,\,20,\,22,\,27 &\stackrel{2}{=}2,\,4,\,9,\,10,\,15,\,17,\,21,\,23,\,25\\
&\stackrel{2}{=}3,\,5,\,7,\,11,\,13,\,18,\,19,\,24,\,26.
\end{aligned}
\label{chn27ex1}
\end{equation}
While Prouhet himself did not give a proof, his result has subsequently been proved by several authors in various ways \cite{Le, Ng, Ro1, Wr2, Wr3}. 

It has been proved by Wright \cite{Wr1} that $P(k,\,j) \leq (k^2+k+2)/2$ when $k$ is even and $P(k,\,j) \leq (k^2+3)/2$ when $k$ is odd. However, Wright's method proves only the existence of solutions of \eqref{basicchn} and cannot be used to construct actual examples of multigrade chains. When $j=2$, it has been shown that $P(k,\,2)=k+1$ when $k \leq 9$ \cite[p. 440, \;p. 449]{HW} and also when $k=11$ \cite{CW}. Further, it has been shown that $P(k,\,j)=k+1$ for  $k=2,\,3$ and $5$ and for all values of $j$ \cite[p. 437]{HW}.

Numerous papers have been published on Prouhet's problem, especially concerning the particular case of equations \eqref{basicchn} when $j=2$ and this problem is now referred to as the Prouhet-Tarry-Escott problem.  Gloden has written an entire book on  multigrade equations and multigrade chains \cite{Gl} and the problem has been the subject of two survey articles \cite{Bo, RN} both of which contain extensive bibliographies. Further, Prouhet's problem has been linked to  various other problems \cite{AMZ, BP, BOR, Ce, GGG}. However, despite the passage of time since the publication of Prouhet's note in 1851  and the attention  bestowed on the problem,  until now Prouhet's  original result has not been improved. 

A remarkable feature of Prouhet's solution of the equations \eqref{basicchn} is that the integers $a_{uv},\;u=1,\,2,\,\ldots,\,s,\;v=1,\,2,\,\ldots,\,j$, are a permutation of the first $N$ consecutive positive integers where $N=j^{k+1}$. The problem of separating  $N$ consecutive integers into sets with equal power sums has been considered  in two articles \cite{Ro2, Ro3} by Roberts who has shown that ``if $q$ is a factorization of $n$ whose factors have least common multiple $L_q$ then the first $n$ nonnegative integers can be split into $L_q$ classes  with equal $t$-th power sums   for all $t$ satisfying 
\[ 
0 \leq t < q^*-\max_{0 < s < L_q} \nu_s,
\]
where $q^*$ is the number of factors in $q$ and $\nu_s$ is the number of them that divide $s$". 
The maximum possible value of $t$ is relatively small and  is the smallest exponent in the canonical prime factorization of $n$.

In this paper we will show that the  consecutive positive integers $1,\,2,\,\,\ldots,$ $2j^k$ can be separated into $j$ sets of $2j^{k-1}$ members satisfying the relations \eqref{basicchnnot1}. In fact, we show that this can, in general, be done in at least $\{(j-1)!\}^{k-1}$ ways. For $j > 2$, the integer $2j^{k}$ is much smaller than $j^{k+1}$ and the result is thus a significant improvement over Prouhet's solution of \eqref{basicchnnot1}. 

 We also show that there  exist infinitely many other positive integers $N=js$ such that the positive integers $1,\,\,2,\,\ldots,\,N$ can be separated into $j$ sets, each set containing $s$ integers, such that the $j$ sets provide a solution of \eqref{basicchnnot1} and, in general, this can be done in several ways. The theorems in this paper  give much better results as compared to the results obtained by Roberts \cite{Ro2, Ro3}.

\section{Some preliminary lemmas}
\begin{lem} If there exist integers $a_{uv},\;u=1,\,2,\,\ldots,\,s,\;v=1,\,2,\,\ldots,\,j$ such that
\begin{equation}
a_{11},\,a_{21},\ldots,\,a_{s1} \stackrel{k}{=} a_{12},\,a_{22},\ldots,\,a_{s2} \\
\stackrel{k}{=}  \ldots \stackrel{k}{=} a_{1j},\,a_{2j},\ldots,\,a_{sj},
\end{equation}
then
\begin{equation}
\begin{aligned}
&Ma_{11}+K,\,Ma_{21}+K,\ldots,\,Ma_{s1}+K \\
& \quad \stackrel{k}{=} Ma_{12}+K,\,Ma_{22}+K,\ldots,\,Ma_{s2}+K \\
&\quad\stackrel{k}{=}  \ldots \\
& \quad  \stackrel{k}{=} Ma_{1j}+K,\,Ma_{2j}+K,\ldots,\,Ma_{sj}+K,
\end{aligned}
\end{equation}
where $M$ and $K$ are arbitrary integers.
\end{lem}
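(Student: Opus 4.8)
The plan is to verify directly that the affine substitution $a \mapsto Ma+K$ preserves equality of power sums up to degree $k$, working from the definition \eqref{basicchn} rather than from any symmetric-function machinery. The hypothesis says precisely that for each fixed $r$ with $1 \le r \le k$, the quantity $\sum_{u=1}^s a_{uv}^r$ takes the same value for every column index $v=1,\,2,\,\ldots,\,j$. To this I would append the trivial remark that $\sum_{u=1}^s a_{uv}^0 = s$ is also independent of $v$, since each of the $j$ sets contains exactly $s$ members.

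First I would fix $r$ with $1 \le r \le k$ and expand each transformed summand by the binomial theorem, then interchange the order of summation so as to isolate the $v$-dependence in the inner power sums:
\begin{equation}
\sum_{u=1}^s (Ma_{uv}+K)^r = \sum_{i=0}^r \binom{r}{i} M^i K^{r-i} \sum_{u=1}^s a_{uv}^i.
\end{equation}
The coefficients $\binom{r}{i} M^i K^{r-i}$ carry no dependence on $v$, so everything reduces to the behaviour of the inner sums $\sum_{u=1}^s a_{uv}^i$.

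The key observation is that every exponent appearing in this expansion satisfies $0 \le i \le r \le k$. Hence each inner sum $\sum_{u=1}^s a_{uv}^i$ is independent of $v$: by the hypothesis when $1 \le i \le k$, and trivially (equal to $s$) when $i=0$. It follows that the entire right-hand side is independent of $v$, and since this holds for each $r=1,\,2,\,\ldots,\,k$, the transformed sequences $Ma_{uv}+K$ satisfy \eqref{basicchn} to the same degree $k$. This is exactly the stated conclusion.

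I expect no genuine obstacle here, as the result is essentially a bookkeeping identity. The only point requiring a moment's care is to include the $i=0$ term in the expansion, whose contribution $K^r s$ is constant across the sets precisely because all $j$ sets have the same cardinality $s$. Since the argument never uses anything about the particular values of $M$ and $K$, it holds for arbitrary integers, as claimed; indeed it would remain valid over any commutative ring.
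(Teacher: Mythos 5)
Your proof is correct and takes essentially the same approach as the paper: the paper simply remarks that the lemma ``follows immediately from the binomial theorem,'' and your argument is precisely that binomial expansion carried out in full, including the (correct) observation that the $i=0$ term contributes the constant $K^r s$ because all sets have cardinality $s$.
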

\begin{proof}
 When $j=2$, this is a simple consequence of the binomial theorem and is a well-known lemma \cite{Do}. When $j > 2$, then also, the lemma follows immediately from the binomial theorem.
\end{proof}

\begin{lem}For any arbitrary positive integer $j> 1 $, the first $2j$ consecutive positive integers can be separated into $j$ sets, each set containing  two integers,  such that the sum of the integers in each set is the same.
\end{lem}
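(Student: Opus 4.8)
The plan is to exhibit an explicit pairing and verify it directly, since the statement is a purely combinatorial fact about the set $\{1, 2, \ldots, 2j\}$. First I would compute the total $1 + 2 + \cdots + 2j = j(2j+1)$. If the first $2j$ positive integers are to be split into $j$ two-element sets all having a common sum $S$, then summing over the $j$ sets gives $jS = j(2j+1)$, which forces $S = 2j+1$. This both shows what the common sum must be and tells us which integers should be paired together.

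With the target sum $2j+1$ in hand, I would then propose the pairing in which the integer $i$ is matched with $2j+1-i$, giving the $j$ sets $\{i,\, 2j+1-i\}$ for $i = 1, 2, \ldots, j$. Each such set has sum exactly $2j+1$ by construction, so the equal-sum condition is immediate. This is the key step, and it is essentially forced by the preceding sum computation.

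The one remaining point to check is that these $j$ sets genuinely form a partition of $\{1, 2, \ldots, 2j\}$ into disjoint two-element sets. As $i$ ranges over $1, \ldots, j$, the smaller entries $i$ run through $1, \ldots, j$ and the larger entries $2j+1-i$ run through $j+1, \ldots, 2j$; hence all $2j$ listed values are distinct and together exhaust the set. I would also note in passing that each pair has two distinct elements, since $i = 2j+1-i$ would require $2i = 2j+1$, which is impossible for an integer $i$. I do not expect any real obstacle in this lemma: the argument is a direct verification, and the only matters needing a moment's care are confirming the disjointness and exhaustiveness of the pairing rather than any substantive difficulty.
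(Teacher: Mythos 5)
Your proposal is correct and uses exactly the same pairing as the paper, namely the $j$ sets $\{i,\,2j+1-i\}$ for $i=1,\ldots,j$, each summing to $2j+1$; your additional remarks (deriving the forced common sum and checking disjointness) simply make explicit what the paper leaves as an observation.
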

\begin{proof}  The $j$ sets $\{u,\,2j+1-u\},\;u=1,\,2,\,\ldots,\,j$, have the same sum $2j+1$. Since the integers in these $j$ sets are the  first $2j$ consecutive positive integers, the lemma is proved.
\end{proof}

\begin{lem} For any arbitrary positive integers $m$ and  $j> 1 $, the first $2mj$ consecutive positive integers can be separated into $j$ sets, each set containing  $2m$ integers,  such that the sum of the integers in each set is the same.
\end{lem}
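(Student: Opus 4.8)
The plan is to reduce everything to a simple pairing of the first $2mj$ integers, exactly as in the previous lemma (the case $m=1$), and then to distribute the resulting pairs evenly among the $j$ sets. First I would record the target: the sum of the first $2mj$ positive integers is $mj(2mj+1)$, so if an equal partition into $j$ sets exists, each set must have sum $m(2mj+1)$.

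Next I would form the $mj$ pairs $\{u,\,2mj+1-u\}$ for $u=1,\,2,\,\ldots,\,mj$. Each such pair has sum exactly $2mj+1$, and as $u$ ranges over $1,\,\ldots,\,mj$ the numbers $u$ and $2mj+1-u$ together exhaust the integers $1,\,2,\,\ldots,\,2mj$ without repetition, so these $mj$ pairs partition the first $2mj$ consecutive positive integers.

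Finally, since every pair has the identical sum $2mj+1$, I would group the $mj$ pairs into $j$ blocks of $m$ pairs each (for instance, the $v$-th block consisting of the pairs indexed by $u=(v-1)m+1,\,\ldots,\,vm$, for $v=1,\,2,\,\ldots,\,j$). Each block then consists of $2m$ distinct integers whose total is $m(2mj+1)$, a value independent of which pairs were assigned to it, so all $j$ sets have the same sum.

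I do not anticipate any genuine obstacle here: the entire content is the observation that equal-sum pairs may be redistributed freely among the sets. The only points requiring a word of care are the bookkeeping that the pairing uses each of the integers $1,\,2,\,\ldots,\,2mj$ exactly once and that assigning $m$ pairs to a set yields precisely $2m$ members; both are immediate from the construction.
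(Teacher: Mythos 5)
Your proof is correct and is essentially the same as the paper's: the paper splits $1,\ldots,2mj$ into $2j$ blocks of $m$ consecutive integers and joins block $u$ with block $2j+1-u$, which produces exactly the sets your construction yields when you group the pairs $\{u,\,2mj+1-u\}$, $u=(v-1)m+1,\ldots,vm$, into the $v$-th set. The only (cosmetic) difference is the order of operations --- you pair first and then group into blocks, while the paper blocks first and then pairs --- plus your slightly more general remark that \emph{any} assignment of $m$ pairs per set would work.
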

\begin{proof} This is a straightforward generalisation of Lemma 2. We first divide the  consecutive integers $1,\,2,\,\ldots,\,2mj$ into $2j$ blocks, each block   consisting  of $m$ consecutive integers -- the first block being $1,\,2,\,\ldots,\,m$. Next for each integer $u, \; 1 \leq u \leq j$, we construct a set consisting of the $m$ integers of the $u^{\rm th}$ block and the $m$ integers of the $(2j+1-u)^{\rm th}$ block. We thus get $j$ sets, each set consisting of $2m$ integers,  such that the sum of the integers in each set is $m(2mj+1)$. This proves the lemma.  
\end{proof}

\begin{lem} For any arbitrary positive integer $j> 1 $, the first $j^2$ consecutive positive integers can be separated into $j$ sets, each set containing  $j$ integers,  such that the sum of the integers in each set is the same.
\end{lem}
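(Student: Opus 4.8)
The plan is to exhibit an explicit partition by a cyclic, Latin-square–type construction on a square array. First I would record that the total of the integers $1,2,\ldots,j^2$ equals $j^2(j^2+1)/2$, so that for a partition into $j$ equal-sum sets the common sum is forced to be $j(j^2+1)/2$; since exactly one of $j$ and $j^2+1$ is even, this target is always an integer and there is no parity obstruction to overcome.

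Next I would arrange the integers in a $j\times j$ array by setting $e_{i,\ell}=(i-1)j+\ell$ for $i,\ell\in\{1,2,\ldots,j\}$, so that row $i$ holds the block $(i-1)j+1,\ldots,ij$. For each $c\in\{0,1,\ldots,j-1\}$ I would define the set $S_c$ to consist of the $j$ entries $e_{i,\ell}$ with $\ell\equiv i+c\pmod j$, that is, one entry from each row along a ``broken diagonal'' of the array. Because, for fixed $c$, the column index $\ell$ runs exactly once through each residue as $i$ runs through $1,\ldots,j$, each $S_c$ contains precisely one entry from every row and every column, and every entry of the array lies in exactly one $S_c$; thus $S_0,\ldots,S_{j-1}$ partition $\{1,\ldots,j^2\}$ into $j$ sets of $j$ integers.

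It then remains to check the sums. Writing $e_{i,\ell}=(i-1)j+\ell$ splits the sum of $S_c$ into a ``row part'' $j\sum_{i=1}^{j}(i-1)=j^2(j-1)/2$, which is the same for every $c$ because each set takes one element from each row, and a ``column part'' $\sum\ell$, which equals $j(j+1)/2$ for every $c$ because the columns used by $S_c$ form a permutation of $1,\ldots,j$. Adding these gives the common value $j(j^2+1)/2$ for every $c$, as required.

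The argument involves no serious obstacle; the only point needing care is verifying that the broken-diagonal rule simultaneously yields a genuine partition (each integer used exactly once) and the equal-sum property, and this is precisely the Latin-square property of the map $(i,c)\mapsto\bigl((i+c)\bmod j\bigr)$. One could equivalently phrase the whole argument through the base-$j$ expansion $n=a+bj$, classifying $n$ by $(a+b)\bmod j$; by Lemma 1 (with $M=1$, $K=1$) the shift between $\{0,1,\ldots,j^2-1\}$ and $\{1,2,\ldots,j^2\}$ preserves equal sums, so either normalization may be used.
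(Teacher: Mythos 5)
Your proof is correct and follows essentially the same route as the paper: the broken-diagonal sets $S_c$ defined by $\ell\equiv i+c\pmod j$ are exactly the paper's cyclically shifted sets, and your row-part/column-part computation of the common sum $j(j^2+1)/2$ is the same observation the paper makes about each summand $u$ and each multiple $uj$ appearing once per set. No gap to report.
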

\begin{proof} If we separate the first $j^2$ consecutive positive integers into the  $j$ sets,
\begin{align*}
&\{1,&&j+2,&& 2j+3,&& 3j+4,&& \ldots,& (j-1)j+j\},\\
&\{j+1,&&2j+2,&& 3j+3,&& 4j+4,&& \ldots,& j\},\\
&\{2j+1,&&3j+2,&& 4j+3,&& 5j+4,&& \ldots,& j+j\},\\
\vdots \\
&\{(j-1)j+1,&&2,&& j+3,&& 2j+4,&& \ldots,& (j-2)j+j\},
\end{align*}
it would be observed that each of the numbers $u,\; u=1,\,\ldots,\,j$,
occurs  as a summand in one and only one member of each set 
and the same is true for each of the numbers $uj,\; u=1,\,\ldots,\,j-1$. It follows that the sum of the members in each set is the same, the common sum being $j(j^2+1)/2$. Further, each set contains $j$ integers and it is readily seen that the integers in all the $j$ sets put together are just a permutation of the first $j^2$ consecutive positive integers. Thus the lemma is proved.
\end{proof}

\begin{lem} Any solution of the multigrade chain \eqref{basicchnnot1} yields a solution of the multigrade chain 
\begin{equation}
b_{11},\,b_{21},\ldots,\,b_{t1} \stackrel{k+1}{=} b_{12},\,b_{22},\ldots,\,b_{t2} \\
\stackrel{k+1}{=}  \ldots \stackrel{k+1}{=} b_{1j},\,b_{2j},\ldots,\,b_{tj} \label{basicchnnot1b}
\end{equation}
where $t=js$.
\end{lem}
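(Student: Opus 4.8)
The plan is to raise the degree by one by applying the classical ``shift and permute'' construction simultaneously to all $j$ sets. Write $S_v = \{a_{1v}, a_{2v}, \ldots, a_{sv}\}$ for $v = 1, 2, \ldots, j$, and for any finite set of integers $S$ let $P_r(S) = \sum_{a \in S} a^r$ denote its $r$-th power sum. The hypothesis \eqref{basicchnnot1} says exactly that $P_r(S_1) = P_r(S_2) = \cdots = P_r(S_j)$ for $r = 1, 2, \ldots, k$; I write $p_r$ for this common value, and note that $P_0(S_v) = s$ for every $v$ too. The quantities that may genuinely differ from one set to another are the top power sums $q_v = P_{k+1}(S_v)$, and the entire idea is to arrange these so that they cancel.

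First I would choose $j$ distinct integer shifts $c_1, c_2, \ldots, c_j$ (taken large and well separated, so that the translates below are pairwise disjoint) and $j$ permutations $\sigma_1, \ldots, \sigma_j$ of $\{1, 2, \ldots, j\}$, and define the new sets
\begin{equation}
T_v = \bigcup_{w=1}^{j} \bigl( S_{\sigma_v(w)} + c_w \bigr), \qquad v = 1, 2, \ldots, j,
\end{equation}
where $S + c$ denotes the translate of $S$ by $c$. Each $T_v$ is a union of $j$ translated copies of the original sets, hence contains exactly $t = js$ integers, and if the $\sigma_v$ are taken to be distinct (for instance the $j$ cyclic shifts, arranged as a Latin square) the $T_v$ are distinct, so the resulting chain is nontrivial.

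Next I would verify the power-sum conditions by expanding each summand with the binomial theorem,
\begin{equation}
P_r(T_v) = \sum_{w=1}^{j} \sum_{m=0}^{r} \binom{r}{m} c_w^{\,r-m} \, P_m\!\left(S_{\sigma_v(w)}\right).
\end{equation}
For $1 \le r \le k$ every index $m$ that occurs satisfies $m \le r \le k$, so $P_m(S_{\sigma_v(w)}) = p_m$ independently of $w$ and $v$; hence $P_r(T_v)$ is the same for all $v$ (this is essentially Lemma 1 applied block by block). For $r = k+1$ I would split off the single term $m = k+1$, obtaining
\begin{equation}
P_{k+1}(T_v) = \sum_{w=1}^{j} \sum_{m=0}^{k} \binom{k+1}{m} c_w^{\,k+1-m} \, p_m \; + \; \sum_{w=1}^{j} q_{\sigma_v(w)} .
\end{equation}
The first double sum does not depend on $v$, and the second sum equals $\sum_{u=1}^{j} q_u$ because $\sigma_v$ merely permutes the indices. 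Hence $P_{k+1}(T_1) = \cdots = P_{k+1}(T_j)$, and the sets $T_1, \ldots, T_j$ furnish the desired solution of \eqref{basicchnnot1b}.

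The one genuinely substantive point, and the place where a careless argument could go wrong, is the top degree $r = k+1$: here the per-block power sums $q_v$ really are unequal in general, and what rescues the construction is that each $q_u$ is summed exactly once into every $T_v$, so the uncontrolled part of $P_{k+1}(T_v)$ collapses to the permutation-invariant total $\sum_u q_u$. Everything at lower degrees is forced by the hypothesis together with routine binomial bookkeeping. A final remark I would make is that when the original $S_v$ partition $\{1, 2, \ldots, N\}$ with $N = js$, choosing $c_w = (w-1)N$ and letting the $\sigma_v$ form a Latin square makes the $T_v$ partition $\{1, 2, \ldots, jN\}$; this is what permits the lemma to be iterated on consecutive integers in the main theorems, and it is also the source of the freedom counted by $\{(j-1)!\}^{k-1}$.
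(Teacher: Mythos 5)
Your proof is correct and follows essentially the same route as the paper: the paper likewise forms each new set as the union of one shifted copy of every original set (using cyclic rotations of the shifts $h_1,\ldots,h_j$, a special case of your Latin-square arrangement), expands by the binomial theorem, and observes that the lower-degree terms agree by hypothesis while the uncontrolled top power sums collapse to the permutation-invariant total $\sum_{u,v} a_{uv}^{k+1}$. Your closing remark about choosing the shifts and a Latin square so that the new sets partition consecutive integers is exactly how the paper deploys this lemma in Theorem 6.
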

\begin{proof} Let $h_1,\,h_2,$  $\ldots,\,h_j$ be an arbitrary  set of $j$ distinct integers. We take the integers $b_{u1},\;u=1,\,2,\,\,\ldots,\,t$, as follows:
\begin{equation}
\begin{aligned}
&a_{11}+h_1,\;a_{21}+h_1,\,\ldots,\,a_{s1}+h_1, \\
&a_{12}+h_2,\;a_{22}+h_2,\,\ldots,\,a_{s2}+h_2,\\
&\vdots \\
&a_{1j}+h_j,\;a_{2j}+h_j,\,\ldots,\,a_{sj}+h_j.
\end{aligned} 
\label{setb1}
\end{equation}
For any given integer $v$ where $2 \leq v \leq j$, we replace  $h_1,\,h_2,$  $\ldots,\,h_j$  in the set of integers \eqref{setb1} by $h_v,\,h_{v+1},\,\ldots,\,\,h_{v+j-1}$ respectively where we take $h_m=h_{m-j}$  when $m > j$, and the resulting integers are taken to be  the integers $b_{uv},\;u=1,\,2,\,\,\ldots,\,t$. We will now show that, with these values of $b_{uv}$, the relations \eqref{basicchnnot1b} are satisfied.

The proof is by the multinomial theorem. In view of the relations \eqref{basicchnnot1},  it is readily seen that the relations \eqref{basicchnnot1b} are true for exponents $1,\,2,\,\ldots,\,k$. Further,  when we consider the relation \eqref{basicchnnot1b}  for the exponent $k+1$, on expanding the terms of the first set, that is, $b_{u1}^{k+1},\;u=1,\,\ldots,\,t$, and adding only the terms involving $h_1^r,\,h_2^r,\,\ldots,\,h_j^r$ where $1 \leq r \leq k+1$,  we get
\[
\begin{aligned}
&\sum_{u=1}^s \binom{k+1}{r}a_{u1}^{k+1-r}h_1^r+\sum_{u=1}^s \binom{k+1}{r}a_{u2}^{k+1-r}h_2^r+\cdots+\sum_{u=1}^s \binom{k+1}{r}a_{uj}^{k+1-r}h_j^r\\
& \quad =(h_1^r+h_2^r+\cdots+h_j^r)\sum_{u=1}^s \binom{k+1}{r}a_{u1}^{k+1-r}.
\end{aligned}
\]
It is now easy to see that  the terms involving $h_i^r,\;i=1,\,2,\,\ldots,\,j$, where $1 \leq r \leq k+1$, add up to the same common sum in  each set. Further, the terms independent of $h_i$ add up to $\sum_{u=1}^s\sum_{v=1}^ja_{uv}^{k+1}$ in  each set.  It is thus seen that the relations \eqref{basicchnnot1b} are also true for the exponent $k+1$. This proves the lemma. 
\end{proof}

\section{Multigrade chains consisting only of the first $N$ consecutive positive integers}

In Section 3.1 we give three theorems which show that there exist infinitely many integers $N=js$ such that the consecutive positive integers $1,\,2,\,\ldots,\,N$ can be separated into $j$ sets, each set consisting of $s$ integers, such that the $j$ sets provide a solution of \eqref{basicchnnot1} for a certain value of $k$. In Section 3.2 we give some numerical examples of such multigrade chains.

\subsection{}
\begin{thm} If $N=2j^k$ where $j \geq 2$ and $k \geq 1$, the first $N$ consecutive positive integers can be separated into $j$ sets in at least $\{(j-1)!\}^{k-1}$ ways, each set consisting of $2j^{k-1}$ integers, such that the $j$ sets   provide a solution of the multigrade chain \eqref{basicchnnot1}.
\end{thm}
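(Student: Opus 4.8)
The plan is to argue by induction on $k$, using Lemma 2 as the base case and Lemma 5 as the engine driving the induction. For $k=1$ we have $N=2j^{k}=2j$, the prescribed set size is $2j^{k-1}=2$, and the grade is $k=1$; this is exactly the content of Lemma 2, which separates $1,2,\ldots,2j$ into the $j$ pairs $\{u,\,2j+1-u\}$, $u=1,\ldots,j$, of common sum $2j+1$. This single partition accounts for the $\{(j-1)!\}^{0}=1$ ways demanded when $k=1$, so the base case holds.

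For the inductive step I would assume that $1,2,\ldots,2j^{k}$ have already been separated into $j$ sets of $s=2j^{k-1}$ integers each, forming a solution $a_{uv}$ of \eqref{basicchnnot1} of grade $k$. I then feed this solution into Lemma 5, but with a deliberate choice of the free shifts: I take $h_1,\ldots,h_j$ to be the $j$ multiples $0,\,2j^{k},\,2\cdot 2j^{k},\ldots,(j-1)\cdot 2j^{k}$ of $2j^{k}$, in some order. Lemma 5 produces a grade-$(k+1)$ solution $b_{uw}$ with $t=js=2j^{k}=2j^{(k+1)-1}$ integers in each set. Inspecting the construction of Lemma 5, the complete multiset of the $b_{uw}$ consists of all sums $a+h$ with $a$ running through $1,\ldots,2j^{k}$ and $h$ through the values $0,2j^{k},\ldots,(j-1)2j^{k}$; since these shift-values are spaced exactly $2j^{k}$ apart, this multiset is precisely $1,2,\ldots,2j^{k+1}$, with no repetitions. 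Hence the output again consists of the first $N=2j^{k+1}$ consecutive positive integers, split into $j$ sets of the required size, and the structural part of the induction is complete.

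To recover the factor $\{(j-1)!\}^{k-1}$ I would track how many distinct partitions each pass through Lemma 5 produces. The only freedom at each pass is the assignment of the $j$ forced shift-values to the symbols $h_1,\ldots,h_j$, giving $j!$ orderings; but replacing $(h_1,\ldots,h_j)$ by a cyclic shift merely relabels the $j$ output sets cyclically --- this is transparent from the cyclic substitution $h_m=h_{m-j}$ built into Lemma 5 --- and so yields the same partition. The $j!$ orderings therefore fall into orbits of size $j$ under this cyclic action, leaving $(j-1)!$ distinct partitions arising from each grade-$k$ input. As distinct inputs clearly give distinct outputs, the number of partitions is multiplied by $(j-1)!$ at each of the $k-1$ steps, starting from a single partition, which gives the stated lower bound $\{(j-1)!\}^{k-1}$.

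The step I expect to demand the most care is the verification underlying the orbit count: I must confirm that for a suitable ordering of the $h_i$ no nontrivial cyclic shift fixes the resulting partition, so that the $j!$ orderings really do split into full orbits of size $j$ and the $(j-1)!$ representatives are pairwise distinct. The word ``at least'' in the statement provides some slack, but a clean argument should rule out such coincidences, and likewise confirm that partitions descending from distinct grade-$k$ inputs cannot collide.
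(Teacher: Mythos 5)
Your proposal is correct, and while it shares the paper's skeleton (induction on $k$, base case Lemma~2, inductive engine Lemma~5), the inductive step itself is genuinely different. The paper first applies Lemma~1 with $M=j$, $K=-j$ to spread the grade-$n$ solution over the multiples $0,\,j,\,2j,\,\ldots,\,2j^{n+1}-j$, and then invokes Lemma~5 with the small shifts $h_i\in\{1,\,2,\,\ldots,\,j\}$, so that each new set interleaves the $j$ residue classes modulo $j$; you instead leave the grade-$k$ solution in place on $1,\,2,\,\ldots,\,2j^k$ and invoke Lemma~5 with the large shifts $0,\,2j^k,\,\ldots,\,(j-1)2j^k$, so that each new set is a union of translated blocks. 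Both choices tile $\{1,\,2,\,\ldots,\,2j^{k+1}\}$ exactly; yours dispenses with Lemma~1 altogether and in general produces different partitions from the paper's. The multiplicity counts also differ in form but not substance: the paper fixes $h_1=1$ and permutes the remaining $j-1$ shifts, which is exactly a choice of representatives for the orbits of your cyclic action, and both devices give $(j-1)!$ new chains per input. As for the distinctness you rightly flag as the delicate point, it is easily discharged in your setting: in any output set, the elements lying in the interval from $(i-1)2j^k+1$ to $i\cdot 2j^k$ are precisely one input set translated by $(i-1)2j^k$, so an output partition determines the input partition and determines the shift assignment up to cyclic rotation; hence the $(j-1)!$ orbit representatives give pairwise distinct partitions and distinct inputs never collide. (The paper carries the same obligation, dischargeable by residues modulo $j$ rather than blocks, but in fact only asserts the distinctness; your explicit identification of this step, together with the block argument above, makes your route at least as complete.)
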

\begin{proof} The proof is by induction. It follows from Lemma 2 that the result is true when $k=1$. 

We now assume that the result is true when $k=n$, that is,   
we assume that there exist integers $a_{uv},\;u=1,\ldots,\,s,\;v=1,\ldots,\,j,$ where $s=2j^{n-1}$ such that
\begin{equation}
a_{11},\,a_{21},\ldots,\,a_{s1} \stackrel{n}{=} a_{12},\,a_{22},\ldots,\,a_{s2} \\
\stackrel{n}{=}  \ldots \stackrel{n}{=} a_{1j},\,a_{2j},\ldots,\,a_{sj}, \label{mchn1}
\end{equation}
and the integers $a_{ij}$ are a permutation of the first $2j^n$ positive integers.

On applying Lemma 1 with $M=j,\;K=-j$ to the relations \eqref{mchn1}, we get the multigrade chain,
\begin{equation}
b_{11},\,b_{21},\ldots,\,b_{s1} \stackrel{n}{=} b_{12},\,b_{22},\ldots,\,b_{s2} \\
\stackrel{n}{=}  \ldots \stackrel{n}{=} b_{1j},\,b_{2j},\ldots,\,b_{sj}, \label{mchn2}
\end{equation}
where  the integers $b_{ij}$ are a permutation of  the integers $0,\,j,\,2j,\,\ldots,\,2j^{n+1}-j$.

We now apply Lemma 5 to the relations \eqref{mchn2} taking the integers $h_1,\,h_2,$ $\ldots,\,h_j$, as the integers $1,\,2,\,\ldots,\,j$, and we get the multigrade chain,
\begin{equation}
c_{11},\,c_{21},\ldots,\,c_{t1} \stackrel{n+1}{=} c_{12},\,c_{22},\ldots,\,c_{t2} \\
\stackrel{n+1}{=}  \ldots \stackrel{n+1}{=} c_{1j},\,c_{2j},\ldots,\,c_{tj}, \label{mchn3}
\end{equation}
where  $t=2j^n$ and the integers  $c_{uv},\;u=1,\ldots,\,t,\;v=1,\ldots,\,j$, are obtained by adding each of the integers  $1,\,2,\,\ldots,\,j$ to each of the integers $0,\,j,\,2j,\,\ldots,\,2j^{n+1}-j$. It follows that the integers $c_{uv}$ are the consecutive integers $1,\,2,\,\ldots,\,2j^{n+1}$. Thus, the first $2j^{n+1}$ positive integers have been separated into $j$ sets, each set consisting of $2j^n$ integers, such that the $j$ sets provide a solution of the multigrade chain \eqref{basicchnnot1} with $k=n+1$. 

In fact, we may take  the integers $h_1,\,h_2,\,\ldots,\,h_j$ to be any permutation of the integers $1,\,2,\,\ldots,\,j$, and we still get a  multigrade chain of type \eqref{mchn3} consisting of the consecutive integers $1,\,2,\,\ldots,\,2j^{n+1}$. For getting distinct multigrade chains of type \eqref{mchn3}, we may keep $h_1=1$ as fixed while permuting the remaining $j-1$ integers in $(j-1)!$ ways. Thus, starting from the multigrade chain \eqref{mchn1},  we get $(j-1)!$ distinct multigrade chains \eqref{mchn3} consisting of the consecutive integers $1,\,2,\,\ldots,\,2j^{n+1}$. The theorem  now follows by induction.  
\end{proof}

\begin{thm} If $N=2mj^k$, the first $N$ consecutive positive integers can be separated into $j$ sets in at least $\{(j-1)!\}^{k-1}$ ways, each set consisting of $2mj^{k-1}$ integers, such that the $j$ sets  provide a solution of the multigrade chain \eqref{basicchnnot1}.
\end{thm}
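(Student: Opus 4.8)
The plan is to mirror the inductive argument just used for Theorem 5, the only structural change being that the base case now rests on Lemma 3 rather than Lemma 2. First I would settle the case $k=1$: here $N = 2mj$ and each set must contain $2mj^{1-1} = 2m$ integers, which is exactly the content of Lemma 3. Since $\{(j-1)!\}^{1-1} = 1$, a single separation suffices at this stage, and Lemma 3 supplies one.

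For the inductive step I would assume the statement for $k = n$, namely that there exist integers $a_{uv}$, $u = 1,\ldots,s$, $v = 1,\ldots,j$, with $s = 2mj^{n-1}$, forming a multigrade chain of degree $n$ whose entries are a permutation of $1, 2, \ldots, 2mj^n$. Applying Lemma 1 with $M = j$ and $K = -j$ converts this into a degree-$n$ chain whose entries $b_{uv} = j a_{uv} - j = j(a_{uv}-1)$ are a permutation of the arithmetic progression $0,\, j,\, 2j,\, \ldots,\, 2mj^{n+1} - j$. I would then invoke Lemma 5 with $h_1,\ldots,h_j$ taken to be $1, 2, \ldots, j$, raising the degree to $n+1$ and producing a chain with $t = js = 2mj^n$ entries in each set, these entries being obtained by adding each of $1, \ldots, j$ to each term of the progression $0,\, j,\, \ldots,\, 2mj^{n+1} - j$.

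The one point requiring care is to confirm that the resulting $c_{uv}$ are precisely the consecutive integers $1, 2, \ldots, 2mj^{n+1}$, with neither gaps nor repetitions. This follows from the residue structure: the progression $0,\, j,\, 2j,\, \ldots$ hits every multiple of $j$ up to $2mj^{n+1} - j$ exactly once, and adjoining the complete residue system $1, \ldots, j$ to each such multiple fills in exactly the integers $1, \ldots, 2mj^{n+1}$. Hence each set now contains $2mj^n = 2mj^{(n+1)-1}$ integers, as required.

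Finally, for the enumeration I would argue exactly as in Theorem 5: Lemma 5 remains valid when $h_1, \ldots, h_j$ is replaced by any permutation of $1, \ldots, j$, and fixing $h_1 = 1$ while permuting the remaining $j-1$ values yields $(j-1)!$ distinct chains at each inductive step. Composing these independent choices across the $n$ steps leading from $k=1$ to $k=n+1$ gives $\{(j-1)!\}^{(n+1)-1}$ separations, which completes the induction. I do not anticipate any genuine obstacle, since the factor $m$ is carried along passively throughout; the only thing to watch is that the progression lengths and block sizes scale correctly by $m$, which they do at every step.
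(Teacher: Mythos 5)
Your proposal is correct and follows exactly the route the paper intends: the paper's own proof of this theorem simply cites Lemma 3 for the base case $k=1$ and states that the inductive step is the same as in the preceding theorem (Lemma 1 with $M=j$, $K=-j$, then Lemma 5 with $h_1,\ldots,h_j$ a permutation of $1,\ldots,j$ fixing $h_1=1$), which is precisely what you carried out with the factor $m$ tracked through. The only slip is a harmless labeling one: the theorem whose argument you mirror is Theorem 6 in the paper's numbering, not Theorem 5.
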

\begin{proof} By Lemma 3, the result is true for $k=1$. The remaining proof is similar to that of Theorem 6 and is accordingly omitted. 
\end{proof}

\begin{thm} If $N=j^{k+1}$ where $j \geq 2$ and $k \geq 1$, the first $N$ consecutive positive integers can be separated into $j$ sets in at least $\{(j-1)!\}^{k-1}$ ways, each set consisting of $j^k$ integers, such that the $j$ sets  provide a solution of the multigrade chain \eqref{basicchnnot1}.
\end{thm}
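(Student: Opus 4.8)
The plan is to proceed by induction on $k$, following closely the argument already used to prove Theorem 6. The only substantive change lies in the base case. Whereas the proof of Theorem 6 began from Lemma 2, here the case $k=1$ requires separating the first $j^{2}$ consecutive positive integers into $j$ sets of $j$ integers each, with equal sums; this is exactly the content of Lemma 4. Since $\{(j-1)!\}^{0}=1$ and Lemma 4 supplies one such separation, the base case holds, matching the claimed count.

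For the inductive step I would assume the result for $k=n$: that there exist integers $a_{uv}$, $u=1,\ldots,s$, $v=1,\ldots,j$, with $s=j^{n}$, forming a multigrade chain of degree $n$ whose entries are a permutation of the first $j^{n+1}$ positive integers. I would then apply Lemma 1 with $M=j$ and $K=-j$. Since the map $a\mapsto ja-j=j(a-1)$ carries $1,2,\ldots,j^{n+1}$ onto $0,j,2j,\ldots,j^{n+2}-j$, this produces a degree-$n$ chain whose entries form a permutation of the multiples $0,j,2j,\ldots,j^{n+2}-j$.

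Next I would apply Lemma 5, taking $h_{1},\ldots,h_{j}$ to be $1,2,\ldots,j$. This raises the degree to $n+1$ and yields a chain with $t=js=j^{n+1}$ entries in each set, so that each set contains $j^{k}$ integers for $k=n+1$, as required. The crucial observation is that adding each of $1,2,\ldots,j$ to each of the multiples $0,j,2j,\ldots,j^{n+2}-j$ recovers precisely the consecutive integers $1,2,\ldots,j^{n+2}$; hence the new chain consists of the first $j^{n+2}$ positive integers partitioned into $j$ sets of $j^{n+1}$ integers each.

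Finally, for the counting I would argue exactly as in Theorem 6: the values $h_{1},\ldots,h_{j}$ may be taken to be any permutation of $1,2,\ldots,j$, and fixing $h_{1}=1$ while permuting the remaining $j-1$ values in $(j-1)!$ ways produces $(j-1)!$ distinct chains from each chain at level $n$. Combined with the inductive hypothesis of $\{(j-1)!\}^{n-1}$ chains, this gives $\{(j-1)!\}^{n-1}\cdot(j-1)!=\{(j-1)!\}^{n}$ chains at level $n+1$, completing the induction. Because the entire mechanism is inherited from Theorem 6, I do not expect any genuine obstacle; the one point demanding care is verifying that the additive recombination in the Lemma 5 step returns a complete, non-overlapping copy of $1,2,\ldots,j^{n+2}$, which follows cleanly from the block structure of the multiples $0,j,\ldots,j^{n+2}-j$.
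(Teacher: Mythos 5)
Your proposal is correct and follows essentially the same route as the paper: the paper's proof of this theorem consists precisely of invoking Lemma 4 for the base case $k=1$ and then repeating the inductive argument of Theorem 6 (Lemma 1 with $M=j$, $K=-j$, then Lemma 5 with $h_1,\ldots,h_j$ a permutation of $1,\ldots,j$, and the same $(j-1)!$ counting). You have simply written out the details that the paper omits, and they check out, including the verification that the shifted multiples $0,j,\ldots,j^{k+2}-j$ recombine into $1,2,\ldots,j^{k+2}$.
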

\begin{proof}  By Lemma 4, the result is true for $k=1$. As in the case of Theorem 7, the remaining proof is similar to the   proof of Theorem 6 and is omitted. This gives yet another proof of Prouhet's result.
\end{proof}

\subsection{}
We now give a few numerical examples. Since $18=2.3^2$, in view of Theorem 6, the consecutive integers 1,\,2,\,\ldots,\,18 can be separated into 3 sets -- each set consisting of 6 integers -- to yield two multigrade chains valid for exponents and 1 and 2. These two multigrade chains are as follows:
\begin{equation}
 1, 5, 9, 12, 14, 16 \stackrel{2}{=} 2, 6, 7, 10, 15, 17          \stackrel{2}{=}3, 4, 8, 11, 13, 18, \label{chn18ex1}
\end{equation}
and
\begin{equation} 1, 6, 8, 11, 15, 16              \stackrel{2}{=}  3, 5, 7, 10, 14, 18             \stackrel{2}{=}2, 4, 9, 12, 13, 17. \label{chn18ex2}
\end{equation}											
We note that the smallest exponent in the canonical prime factorization of 18 is 1, and hence the method described by Roberts \cite{Ro2, Ro3} does not generate the above multigrade chains.

As a second example, in view of Theorem 8, the first 27 consecutive positive integers can be separated into three sets -- each set having 9 integers -- to yield two multigrade chains. These two multigrade chains are as follows:
\begin{equation}
\begin{aligned}
1,\, 6,\, 8,\, 11,\, 13,\, 18,\, 21,\, 23,\, 25&\stackrel{2}{=}2,\, 4,\, 9,\, 12,\, 14,\, 16,\, 19,\, 24,\, 26\\
&\stackrel{2}{=}3,\, 5,\, 7,\, 10,\, 15,\, 17,\, 20,\, 22,\, 27.
\end{aligned}
\label{chn27ex2}
\end{equation}
and
\begin{equation}
\begin{aligned}
1,\, 5,\, 9,\, 12,\, 13,\, 17,\, 20,\, 24,\, 25&\stackrel{2}{=} 2,\, 6,\, 7,\, 10,\, 14,\, 18,\, 21,\, 22,\, 26\\
 &\stackrel{2}{=}3,\, 4,\, 8,\, 11,\, 15,\, 16,\, 19,\, 23,\, 27. 
\end{aligned}
\label{chn27ex3}
\end{equation}

It is interesting to observe that both of the above multigrade chains are distinct from the one given by Prouhet. In fact,\, there is a fourth multigrade chain comprising of the first 27 positive integers. It is as follows:
\begin{equation}
\begin{aligned}
1,\, 5,\, 9,\, 11,\, 15,\, 16,\, 21,\, 22,\, 26 &\stackrel{2}{=} 2,\, 6,\, 7,\, 12,\, 13,\, 17,\, 19,\, 23,\, 27\\
&\stackrel{2}{=}3,\, 4,\, 8,\, 10,\, 14,\, 18,\, 20,\, 24,\, 25.
 \end{aligned}
\label{chn27ex4}
\end{equation}

\section{An open problem}

It follows from the Theorems 6, 7 and 8 that, for any given positive integers $k \geq 1$ and $j \geq 2$, there exist infinitely many integers $N$ such that the first $N$ consecutive positive integers can be separated into $j$ sets that provide a solution of the multigrade chain \eqref{basicchnnot1}. Accordingly for $k \geq 1$ and $j \geq 2$, we  define $N(k,\,j)$ to be the least positive integer $N$ with this property. An immediate consequence of Theorem 6 is that $ N(k,\,j) \leq 2j^k$. It would be of interest to determine the integer $N(k,\,j)$.

It is readily proved that $N(1,\,j)=2j$, $N(2,\,2)=8$ and $N(2,\,3)=18$. Thus, in these cases $N(k,\,j)=2j^k$. In fact, it appears that  $N(k,\,j)=2j^k$ for arbitrary positive integers $k \geq 1$ and $j \geq 2$ but this remains to be proved.

\begin{center}
\Large
Acknowledgments
\end{center}
 
I wish to  thank the Harish-Chandra Research Institute,  Prayagraj for providing me with all necessary facilities that have helped me to pursue my research work in mathematics.

\medskip

\noindent Postal Address: Ajai Choudhry, 
\newline \hspace{1.05 in}
13/4 A Clay Square,
\newline \hspace{1.05 in} Lucknow - 226001, INDIA.
\newline \noindent  E-mail: ajaic203@yahoo.com

\begin{thebibliography}{99}

\bibitem{AMZ} J. Aliste-Prieto, A. de Mier and J. Zamora,  On trees with the same restricted U-polynomial and the Prouhet-Tarry-Escott problem, Discrete Math. 340 (2017),  1435-1441.

\bibitem{BP} Jean Berstel, Dominique Perrin,  The origins of combinatorics on words, European Journal of Combinatorics 28 (2007) 996–-1022


\bibitem{BOR} E. D. Bolker, C. Offner, R. Richman and C. Zara, The Prouhet-Tarry-Escott problem and generalized Thue-Morse sequences, 
J. Comb. 7 (2016),  117-133.

\bibitem{Bo}  P. Borwein and C. Ingalls: The Prouhet-Tarry-Escott Problem
revisited, Enseign. Math. \textbf{40} (1994), 3-27.

\bibitem{Ce} A. Cerny, On Prouhet's solution to the equal powers problem, Theoretical Computer Science, 491 (2013), 33-46.

\bibitem{CW} A. Choudhry and J. Wroblewski, Ideal solutions of the Tarry-Escott problem of degree eleven
with applications to sums of thirteenth powers, Hardy-Ramanujan Journal, Vol. 31 (2008), 1--13.

\bibitem{Do} H. L. Dorwart and O. E. Brown: The Tarry-Escott Problem, Amer. Math.
Monthly \textbf{44} (1937), 613--626.


\bibitem{GGG}  V. Gandikota, B. Ghazi and E. Grigorescu, NP-hardness of Reed-Solomon decoding, and the Prouhet-Tarry-Escott problem, 
SIAM J. Comput. 47 (2018), 1547-1584.

\bibitem{Gl} A. Gloden, Mehrgradige Gleichungen, Noordhoff, Groningen, 1944.

\bibitem{HW} G. H. Hardy and E.M. Wright, An introduction to the theory of numbers, sixth edition, Oxford University Press, 2008.

\bibitem{Le} D. H. Lehmer, The Tarry-Escott problem, Scripta Math. 13 (1947), 37--41.  

\bibitem{Ng}  H. D. Nguyen, A new proof of the Prouhet-Tarry-Escott problem, Integers 16, Paper A01, 9 pages. (2016).
 
\bibitem{RN} S. Raghavendran and  V. Narayanan, The Prouhet Tarry Escott Problem: A Review, Mathematics 2019, 7(3), 227, 14 pages. 


\bibitem{Ro1} J. B. Roberts,  A new proof of a theorem of Lehmer, Canadian J. Math. 10 (1958), 191--194. 

\bibitem{Ro2} J. B. Roberts, Polynomial identities, Proc. Am. Math. Soc. 11 (1960), 723--730.

\bibitem{Ro3} J. B. Roberts, Splitting consecutive integers into classes with equal power sums, Amer. Math. Monthly 71 (1964), 25--37.


\bibitem{Wr1} E. M. Wright, Equal sums of like powers, Bull. Am. Math. Soc. 54 (1948), 755--757.



\bibitem{Wr2}  E. M. Wright,  Equal sums of like powers, Proc. Edinburgh Math. Society 8 (1949), 138--142. 

\bibitem{Wr3}  E. M. Wright, Prouhet's 1851 solution of the Tarry-Escott problem of 1910, Amer. Math. Monthly 66 (1959), 199--201. 



\end{thebibliography}
\end{document}